\providecommand{\U}[1]{\protect\rule{.1in}{.1in}}
\newtheorem{theorem}{Theorem}
\newtheorem{algorithm}[theorem]{Algorithm}
\newtheorem{conclusion}[theorem]{Conclusion}
\newtheorem{definition}[theorem]{Definition}
\newtheorem{example}[theorem]{Example}
\newtheorem{examples}[theorem]{Examples}
\newtheorem{proposition}[theorem]{Proposition}
\newtheorem{remark}[theorem]{Remark}
\newenvironment{Exmps}{\begin{examples}\em}{\end{examples}}
\newtheorem{prf}{Proof}
\begin{document}
\title[\textbf{On} \textbf{Limits of Eventual Families of Sets}]{\textbf{Limits of Eventual Families of Sets with Application to Algorithms for
the Common Fixed Point Problem}}
\author{Yair Censor and Eliahu Levy}
\address{{Yair Censor, Department of Mathematics, University of Haifa, Mt. Carmel,
Haifa 3498838, Israel. Email: yair@math.haifa.ac.il.}\\
{Eliahu Levy, Department of Mathematics, The Technion--Israel Institute of
Technology, Technion City, Haifa 32000, Israel. Email:
eliahu@math.technion.ac.il.}}
\date{August 14, 2021. Revised: March 4, 2022.}

\begin{abstract}
We present an abstract framework for asymptotic analysis of convergence based
on the notions of eventual families of sets that we define. A family of
subsets of a given set is called here an \textquotedblleft eventual
family\textquotedblright\ if it is upper hereditary with respect to inclusion.
We define accumulation points of eventual families in a Hausdorff Topological
space and define the \textquotedblleft image family\textquotedblright\ of an
eventual family. Focusing on eventual families in the set of the integers
enables us to talk about sequences of points.

We expand our work to the notion of a \textquotedblleft
multiset\textquotedblright\ which is a modification of the concept of a set
that allows for multiple instances of its elements and enable the development
of \textquotedblleft multifamilies\textquotedblright\ which are either
\textquotedblleft increasing\textquotedblright\ or \textquotedblleft
decreasing\textquotedblright. The abstract structure created here is motivated
by, and feeds back to, our look at the convergence analysis of an iterative
process for asymptotically finding a common fixed point of a family of operators.

\end{abstract}
\keywords{Common fixed-points, Hausdorff topological space, eventual families, multiset,
multifamily, set convergence, cutters, firmly nonexpansive operators.}
\maketitle

\section{Introduction\label{sect:introduction}}

In this paper we present an abstract framework for asymptotic analysis of
convergence based on the notions of eventual families of sets that we define.
A family $\mathcal{F}$ of subsets of a set $X$ is called here an
\textquotedblleft eventual family\textquotedblright\ if $S\in\mathcal{F}$ and
$S^{\prime}\supseteq S\,$\ implies $S^{\prime}\in\mathcal{F},$ i.e., if it is
upper hereditary with respect to inclusion. If $S\in\mathcal{F}$ and
$S^{\prime}\subseteq S\,$\ implies $S^{\prime}\in\mathcal{F},$ i.e., if it is
lower hereditary with respect to inclusion, then we call it a
\textquotedblleft co-eventual family\textquotedblright\textbf{. }We define
accumulation points of eventual families in a Hausdorff topological space and
define the \textquotedblleft image family\textquotedblright\ $\mathcal{G}$ of
an eventual family $\mathcal{F}$ under a given mapping $f,$ called
\textquotedblleft the push of $\mathcal{F}$ by $f$\textquotedblright\ via
$\mathcal{G=}\mathrm{Push}(f,\mathcal{F)}:=\{S\subseteq Y\,\mid f^{-1}%
(S)\in\mathcal{F}\}.$

Focusing on eventual families in the set $\mathbb{N}$ of the integers enables
us to talk about sequences of points, particularly, points that are generated
by repeated application of an operator $T:X\rightarrow X.$ We then define the
notion of an \textquotedblleft\textbf{$\mathcal{E}$}-limit of a sequence
$(A_{n})_{n\in\mathbb{N}}$ of subsets\ of a set $X$\textquotedblright\ as the
set of all $x\in X$ such that the set of $n$ with $x\in A_{n}$ belongs to
\textbf{$\mathcal{E},$} i.e., $\mathbf{\mathcal{E}}$-$\lim_{n\rightarrow
\infty}A_{n}:=\{x\in X\mid\{n\,|\,x\in A_{n}\}\in\mathbf{\mathcal{E}}\}$ where
$\mathcal{E}$ is an eventual family in $\mathbb{N}.$ The relationship of this
notion with the classical notion of limit of a sequence of sets is studied.

In the sequel we expand our work to the notion of a \textquotedblleft
multiset\textquotedblright\ which is a modification of the concept of a set
that allows for multiple instances of its elements. The number of instances
given for each element is called the multiplicity of that element in the
multiset. With multisets in hand we define and develop \textquotedblleft
multifamilies\textquotedblright\ which are either \textquotedblleft
increasing\textquotedblright\ or \textquotedblleft
decreasing\textquotedblright, connecting with the earlier notions via the
statement that a family of subsets of $X$ is an eventual (resp.\ co-eventual)
family if the multifamily that defines it is increasing (resp.\ decreasing).

The abstract structure created here is motivated by, and feeds back to, our
look at the convergence analysis of an iterative process for asymptotically
finding a common fixed point of a family of operators. This particular case
serves as an example of the possible use of our theory. The work presented
here adds a new angle to the theory of set convergence, see, e.g., the books
by Rockafellar and R.J.-B. Wets \cite[Chapter 4]{rock-book} and by Burachik
and Iusem \cite{burachik-book}.

\section{Eventual Families and Their Use in Limiting Processes\label{sect:Ev}}

\subsection{Eventual Families\label{subsec:eve-fams}}

We introduce the following notion of eventual families of subsets.

\begin{definition}
\label{def:event-co-event}Let $X$ be a set and let $\mathcal{F}$ be a family
of subsets of $X.$ The family $\mathcal{F}$ is called an \textquotedblleft%
\textbf{eventual family\textquotedblright} if it is \textit{upper hereditary
with respect to inclusion}, i.e.,\ if%
\begin{equation}
S\in\mathcal{F},\,S^{\prime}\supseteq S\,\Rightarrow S^{\prime}\in\mathcal{F}.
\end{equation}
The family $\mathcal{F}$ is called a \textquotedblleft\textbf{co-eventual
family\textquotedblright\ }if it is \textit{lower hereditary with respect to
inclusion}, i.e.,\ if%
\begin{equation}
S\in\mathcal{F},\,S^{\prime}\subseteq S\Rightarrow S^{\prime}\in\mathcal{F}.
\end{equation}

\end{definition}

We mention in passing that Borg \cite{borg-hereditary} uses the term
\textquotedblleft hereditary family\textquotedblright, in his work in the area
of combinatorics, for exactly what we call here \textquotedblleft co-eventual
family\textquotedblright. Several simple observations regarding such families
can be made.

\begin{proposition}
\label{lem:simple-observ}(i) A family $\mathcal{F}$ of subsets of $X$ is
co-eventual iff its complement, i.e., the family of subsets of $X$ which are
not in $\mathcal{F}$, is eventual.

(ii) The empty family and the family of all subsets of $X$ are each both
eventual and co-eventual, and they are the only families with this property.
\end{proposition}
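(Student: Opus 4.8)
The plan for part (i) is to recognize the two hereditary conditions as contrapositives of one another once one passes to the complementary family. Writing $\mathcal{F}^{c}:=\{S\subseteq X\mid S\notin\mathcal{F}\}$ for the family of subsets of $X$ not in $\mathcal{F}$, I would start from the definition of $\mathcal{F}$ being co-eventual, namely that $S\in\mathcal{F}$ and $S^{\prime}\subseteq S$ imply $S^{\prime}\in\mathcal{F}$, and take the contrapositive: $S^{\prime}\notin\mathcal{F}$ and $S^{\prime}\subseteq S$ imply $S\notin\mathcal{F}$. After the harmless renaming $S^{\prime}=T$, $S=T^{\prime}$, this reads: $T\in\mathcal{F}^{c}$ and $T^{\prime}\supseteq T$ imply $T^{\prime}\in\mathcal{F}^{c}$, which is precisely the statement that $\mathcal{F}^{c}$ is eventual. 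Since each logical step is reversible and since $(\mathcal{F}^{c})^{c}=\mathcal{F}$, this single contrapositive computation establishes the equivalence in both directions at once.

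For part (ii) I would argue in two stages. First, the empty family contains no set $S$ at all, so both defining implications hold vacuously; and the family $\mathcal{P}(X)$ of all subsets of $X$ contains every $S^{\prime}\subseteq X$, so both implications hold trivially. Hence each of these two families is simultaneously eventual and co-eventual. Second, to show there are no others, suppose $\mathcal{F}$ is both eventual and co-eventual. If $\mathcal{F}$ is empty we are done; otherwise pick any $S_{0}\in\mathcal{F}$. Co-eventuality applied to $S_{0}$ together with the inclusion $\emptyset\subseteq S_{0}$ gives $\emptyset\in\mathcal{F}$, and then eventuality applied to $\emptyset\in\mathcal{F}$, together with the fact that every subset of $X$ is a superset of $\emptyset$, gives $S^{\prime}\in\mathcal{F}$ for every $S^{\prime}\subseteq X$, i.e.\ $\mathcal{F}=\mathcal{P}(X)$. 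Thus the only families with both properties are the empty family and $\mathcal{P}(X)$.

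There is essentially no hard step here; this is a warm-up proposition whose proof is pure unwinding of the definitions. The only points meriting a moment's care are the handling of the empty family by vacuous truth, the use of the involution $\mathcal{F}\mapsto\mathcal{F}^{c}$ to obtain both directions of (i) from one contrapositive, and the remark that in (ii) it is the two extreme sets $\emptyset$ and $X$ --- the former contained in every member, the latter a superset of every subset of $X$ --- that force a single nonempty witness to propagate to all of $\mathcal{P}(X)$.
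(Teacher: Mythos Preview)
Your proof is correct and follows essentially the same approach as the paper: part~(i) is the contrapositive unwinding of the definitions, and part~(ii) shows that a nonempty family with both properties must be all of $\mathcal{P}(X)$. Your route for~(ii)---descend from any $S_{0}\in\mathcal{F}$ to $\emptyset$ by co-eventuality, then ascend to every subset by eventuality---is in fact a slight streamlining of the paper's argument, which instead splits into the cases $S=\emptyset$ and $S\neq\emptyset$ (the latter handled via singletons) and also offers an alternative through $S\cup S^{\prime}$; your single pass through $\emptyset$ avoids the case distinction entirely.
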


\begin{proof}
(i) This follows from the definitions. (ii) That the empty family and the
family of all subsets of $X$ are each both eventual and co-eventual is
trivially true. We show that if $\mathcal{F}$ is eventual and co-eventual and
is nonempty then it must contain all subsets of $X.$ Let $S\in\mathcal{F}$ and
distinguish between two cases. If $S=\emptyset$ then\thinspace\thinspace
$\mathcal{F}$ must contain all subsets of $X$ because $\mathcal{F}$ is
eventual. If $S\neq\emptyset$ let $x\in S$, then, since $\mathcal{F}$ is
co-eventual it must contain the singleton $\{x\}$. Consequently, the set
$\{x,y\},$ for any $y,$ is also in $\mathcal{F}$ and so $\{y\}\in\mathcal{F}$,
thus, all subsets of $X$ are contained in $\mathcal{F}$. Alternatively, if we
look at $S\in\mathcal{F}$, then for any subset $S^{\prime}$ of $X$,
$\mathcal{F}$ contains $S\cup S^{\prime}$ since $\mathcal{F}$ is eventual.
Then since $\mathcal{F}$ is co-eventual, it must contain $S^{\prime}$, leading
to the conclusion that it contains all subsets.
\end{proof}

\begin{remark}
\label{rem:filter}An eventual family $\mathcal{F}$ need not contain the
intersection of two of its members. If it does so for every two of its members
then it is a \textit{filter}.
\end{remark}

Similar to the notion used in \cite{danzig-folkman-shapiro} and
\cite{lent-censor} in the finite-dimensional space setting, we make here the
next definition.

\begin{definition}
\label{def:star-set}Given a family $\mathcal{F}$ of subsets of a set $X$, the
\textquotedblleft\textbf{star set associated with} $\mathcal{F}$%
\textquotedblright, denoted by\thinspace\thinspace$\mathrm{Star}%
(\mathcal{F}),$ is the subset of $X$ that consists of \textit{all }$x\in X$
\textit{such that the singletons }$\{x\}\in\mathcal{F}$, namely,%
\begin{equation}
\mathrm{Star}(\mathcal{F}):=\{x\in X\mid\{x\}\in\mathcal{F}\}.
\end{equation}

\end{definition}

\subsection{Accumulation Points as Limits of Eventual
Families\label{subsect:LimEv}}

Suppose now that $X$ is a \textit{Hausdorff }Topological space.

\begin{definition}
\label{def:limit-pt-and-set}Let $\mathcal{F}$ be an eventual family of subsets
of $X$. A point $x\in X$ is called an \textquotedblleft\textbf{accumulation
(or limit) point} of $\mathcal{F}$\textquotedblright\ if every (open)
neighborhood \footnote{Since, by definition, a neighborhood always contains an
\textit{open} neighborhood, considering all neighborhoods or just the open
ones does not make a difference here.} of $x$ belongs to $\mathcal{F}$. The
set of all accumulation points of $\mathcal{F}$ is called the
\textquotedblleft\textbf{limit set} of $\mathcal{F}$\textquotedblright.
\end{definition}

\begin{proposition}
\label{prop:lim-set-closed}The limit set of an eventual family $\mathcal{F}$
is always closed.
\end{proposition}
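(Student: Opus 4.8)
The plan is to show that the complement of the limit set is open. Let $L$ denote the limit set of the eventual family $\mathcal{F}$, and suppose $x\notin L$. By Definition \ref{def:limit-pt-and-set}, this means there exists an open neighborhood $U$ of $x$ with $U\notin\mathcal{F}$. The natural candidate for an open set separating $x$ from $L$ is $U$ itself, so the claim reduces to verifying that $U\cap L=\emptyset$, i.e., that no point of $U$ is an accumulation point of $\mathcal{F}$.

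To see this, take any $y\in U$. Since $U$ is open, $U$ is itself an open neighborhood of $y$. If $y$ were an accumulation point of $\mathcal{F}$, then every open neighborhood of $y$ would belong to $\mathcal{F}$; in particular $U\in\mathcal{F}$, contradicting the choice of $U$. Hence $y\notin L$. Since $y\in U$ was arbitrary, $U\subseteq X\setminus L$, which exhibits an open neighborhood of $x$ contained in the complement of $L$.

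As $x$ was an arbitrary point of $X\setminus L$, the complement of $L$ is open, so $L$ is closed, completing the proof. I do not anticipate a genuine obstacle here: the only subtlety is the observation that an open neighborhood of $x$ serves simultaneously as an open neighborhood of each of its points, which is exactly what lets a single witness $U$ rule out accumulation at every point of $U$ at once. Note that the Hausdorff hypothesis is not actually needed for this particular statement — it is presumably in force for later results — and the argument uses only the definition of accumulation point, not the eventual (upper hereditary) property of $\mathcal{F}$.
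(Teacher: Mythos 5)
Your proof is correct and is essentially the paper's argument: the paper phrases it as "the complement of the limit set is the union of all open sets not in $\mathcal{F}$," which is exactly your observation that a single open neighborhood $U\notin\mathcal{F}$ witnesses non-accumulation simultaneously for every point of $U$. Your side remarks (neither the Hausdorff hypothesis nor the upper-hereditary property is used) are also accurate.
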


\begin{proof}
We show that the complement of the limit set, i.e., the set of all
non-accumulation points, is open. The point $y$ is a non-accumulation point
iff it has an open neighborhood which does not belong to $\mathcal{F}$,
i.e.,\ when it is a member of some open set not in $\mathcal{F}$. Hence the
complement of the limit set is the union of all open sets not in $\mathcal{F}%
$, and by definition, in a topological space, the union of any family of open
sets is open.
\end{proof}

We turn our attention now to sequences in $X$, i.e.,\ maps $\mathbb{N}%
\rightarrow X,$ where $\mathbb{N}$ denotes the positive integers.

\begin{definition}
\label{def:push}Given are a family $\mathcal{F}$ of subsets of $X$\ and a
mapping between sets $f:X\rightarrow Y$. The family $\mathcal{G}$ of subsets
of $Y$ whose inverse image sets $f^{-1}(S)$ belong to $\mathcal{F}$ will be
denoted by $\mathcal{G=}\mathrm{Push}(f,\mathcal{F)}$ and called the
\textquotedblleft\textbf{push} of $\mathcal{F}$ by $f$\textquotedblright$,$
namely,%
\begin{equation}
\mathcal{G=}\mathrm{Push}(f,\mathcal{F)}:=\{S\subseteq Y\,\mid f^{-1}%
(S)\in\mathcal{F}\}.
\end{equation}

\end{definition}

Combining Definitions \ref{def:limit-pt-and-set} and \ref{def:push} the
following remark is obtained.

\begin{remark}
\label{claim:push}Let $\mathcal{E}$ be an eventual family of subsets
\textit{of }$\mathbb{N}$ and let $f:\mathbb{N}\rightarrow X$ be defined by
some given sequence $(x_{n})_{n\in\mathbb{N}}$ in $X$. The accumulation points
and the limit set of $(x_{n})_{n\in\mathbb{N}}$ with respect to $\mathcal{E}$
are those defined with respect to the push of $\mathcal{E}$ by $f$ .
\end{remark}

The next examples emerge by using two different eventual families in
$\mathbb{N}$. The same `machinery' yields both `cases' via changing the
eventual family $\mathcal{E}$ in $\mathbb{N}$.

\begin{Exmps}
\label{exmps}

\begin{enumerate}
\item Let $\mathcal{E}$ be the family of complements of finite sets in
$\mathbb{N}$. Then accumulation points (i.e., limits with respect to
$\mathcal{E}$) are the usual limits, and if there is a limit point then it is
unique. This is the case, as one clearly sees, in a Hausdorff space $X$
whenever $\mathcal{E}$ is a filter, as here $\mathcal{E}$ clearly is.

\item Let $\mathcal{E}$ be the family of infinite subsets in $\mathbb{N}$.
Then being an accumulation point means \textit{being some accumulation point
of the sequence} in the usual sense, which in general, need not be unique.
Indeed, here $\mathcal{E}$ is not a filter.
\end{enumerate}
\end{Exmps}

\subsection{Operators and Seeking Fixed Points\label{subsect:OpFxPt}}

Continuing to consider a Hausdorff topological space $X$, call any continuous
self-mapping $T:X\rightarrow X$ \textquotedblleft\textbf{an operator}%
\textquotedblright.

\begin{definition}
\label{def:seq-follow-T}Let $X$ be a Hausdorff topological space,
$T:X\rightarrow X$ an operator, $(x_{n})_{n\in\mathbb{N}}$ a sequence in $X,$
and $\mathcal{E}$ an eventual family of subsets of $\mathbb{N}$. We say that
\textquotedblleft\textbf{the sequence }$(x_{n})_{n\in\mathbb{N}}$\textbf{
follows }$T$\textbf{ with respect to $\mathcal{E}$\textquotedblright} if, for
every $S\in\mathcal{E}$, there are integers $p,q$ in $S$ so that
$x_{p}=T(x_{q}).$
\end{definition}

\begin{theorem}
\label{tm} In a Hausdorff topological space $X$, if a sequence $(x_{n}%
)_{n\in\mathbb{N}}$ follows a continuous operator $T$ with respect to some
eventual family $\mathcal{E}$ in $\mathbb{N}$, and if $y$ is an accumulation
point of the sequence with respect to $\mathcal{E}$ then $y$ is a fixed point
of $T$.
\end{theorem}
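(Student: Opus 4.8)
The plan is to argue by contradiction using the Hausdorff property together with the continuity of $T$. Suppose $y$ is an accumulation point of $(x_n)_{n\in\mathbb{N}}$ with respect to $\mathcal{E}$, but $y\neq T(y)$. Since $X$ is Hausdorff, there are disjoint open sets $U\ni y$ and $V\ni T(y)$. By continuity of $T$, the set $W:=T^{-1}(V)$ is open and contains $y$, so $U\cap W$ is an open neighborhood of $y$. By the definition of accumulation point (Definition \ref{def:limit-pt-and-set}), applied via the push of $\mathcal{E}$ along the sequence (Remark \ref{claim:push}), the set $S:=\{n\in\mathbb{N}\mid x_n\in U\cap W\}$ belongs to $\mathcal{E}$.

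Now I would invoke the hypothesis that the sequence follows $T$ with respect to $\mathcal{E}$ (Definition \ref{def:seq-follow-T}): applied to this particular $S\in\mathcal{E}$, there exist integers $p,q\in S$ with $x_p=T(x_q)$. Since $q\in S$ we have $x_q\in W=T^{-1}(V)$, hence $x_p=T(x_q)\in V$. But also $p\in S$ gives $x_p\in U\cap W\subseteq U$. Thus $x_p\in U\cap V$, contradicting the disjointness of $U$ and $V$. Therefore $y=T(y)$, i.e., $y$ is a fixed point of $T$.

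The only real subtlety — and the step I would be most careful about — is making sure the neighborhood $U\cap W$ is genuinely chosen so that membership of $x_q$ in it forces $T(x_q)$ into $V$, while membership of $x_p$ in it forces $x_p$ into $U$; intersecting $U$ with the continuity-pullback $W=T^{-1}(V)$ is exactly what accomplishes both simultaneously for a single index set $S$. Everything else is a direct application of the definitions already in place, so no lengthy computation is needed.
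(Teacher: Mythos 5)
Your proof is correct and follows essentially the same argument as the paper: argue by contradiction, separate $y$ and $T(y)$ by disjoint open sets using the Hausdorff property, intersect $U$ with a continuity pullback neighborhood of $y$, use the accumulation-point hypothesis to get the index set $S\in\mathcal{E}$, and then use the ``follows $T$'' hypothesis to produce $x_p=T(x_q)$ lying in both disjoint sets. The only cosmetic difference is that you take $W=T^{-1}(V)$ directly where the paper chooses a neighborhood $V_y$ with $T(V_y)\subset U_{T(y)}$, which is the same device.
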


\begin{proof}
Assume to the contrary that\ $T(y)\neq y$. Then, since the space is Hausdorff,
$T(y)$ and $y$ have disjoint open neighborhoods $U_{y}$ and $U_{T(y)}$.
Continuity of $T$ guarantees that there is an open neighborhood $V_{y}$ of $y$
so that $T(V_{y})\subset U_{T(y)}$. Hence,%
\begin{equation}
U_{y}\cap T(V_{y})=\emptyset, \label{eq:disjoint}%
\end{equation}
meaning that $T(z)\neq z$ for $z\in U_{y}\cap V_{y}$. But $U_{y}\cap V_{y}$ is
also an open neighborhood of $y$, and $y$ is an accumulation point of the
sequence with respect to $\mathcal{E}$, hence, the set%
\begin{equation}
S:=\{n\in\mathbb{N}\mid\,x_{n}\in U_{y}\cap V_{y}\}
\end{equation}
is in $\mathcal{E}$. Since the sequence follows $T$ with respect to
$\mathcal{E}$, there must be $p$ and $q$ in $S$ so that $x_{p}=T(x_{q})$. This
point must belong to both $U_{y}$ and $T(V_{y})$, which contradicts
(\ref{eq:disjoint}).
\end{proof}

\subsection{Finitely-Insensitive Eventual Families in $\mathbb{N}%
$\label{subsec:FinIns}}

When considering eventual families in $\mathbb{N}$ it is often desirable to
assume that they are \textit{finitely-insensitive}, as we define next. All our
examples have this property.

\begin{definition}
\label{def:finite-intensive}A family $\mathcal{E}$ of subsets of $\mathbb{N}$
is called a \textquotedblleft\textbf{finitely-insensitive
family\textquotedblright} if for any $S\in\mathcal{E}$, finitely changing $S$,
which means here\ adding and/or deleting a finite number of its members, will
result in a set $S^{\prime}\in\mathcal{E}$.
\end{definition}

\subsection{Limits of Sequences of Sets\label{subsect:LimSeqSt}}

In \cite{lent-censor}, \cite{danzig-folkman-shapiro} and \cite{salinetti-wets}
the notions of \textit{upper limit} and \textit{lower limit} of a sequence of
subsets $(A_{n})_{n\in\mathbb{N}}$ of some $X$ are considered, in the
framework of the Euclidean space, a locally compact metric space, or a normed
linear space of finite dimension, respectively. When these upper limit
$\limsup_{n\rightarrow\infty}A_{n}$ and lower limit $\liminf_{n\rightarrow
\infty}A_{n}$ coincide one says that the sequence of sets has their common
value as a \textit{limit}, denoted by $\lim_{n\rightarrow\infty}A_{n}.$ Thus,
a function defined on sets, or taking values in sets, may be said to be
\textit{continuous} when it respects limits of sequences.

Here we define the notion of an \textquotedblleft\textbf{$\mathcal{E}$}-limit
of a sequence $(A_{n})_{n\in\mathbb{N}}$ of subsets\ of a set $X$%
\textquotedblright\ and state its relationship with the classical notion of
limit mentioned above.

\begin{definition}
\label{def:E-limit}Let $X$ be a set, let $(A_{n})_{n\in\mathbb{N}}$ be a
sequence of subsets of $X,$ let $\mathcal{E}$ be an eventual family in
$\mathbb{N}$ and assume that $\mathcal{E}$ is finitely-insensitive. The
\textquotedblleft\textbf{$\mathcal{E}$}-limit of the sequence $(A_{n}%
)_{n\in\mathbb{N}}$\textquotedblright, denoted by $\mathbf{\mathcal{E}}$%
-$\lim_{n\rightarrow\infty}A_{n},$ is the set of all $x\in X$ such that the
set of $n$ with $x\in A_{n}$ belongs to $\mathcal{E}$, namely,%
\begin{equation}
\mathbf{\mathcal{E}}\text{-}\lim_{n\rightarrow\infty}A_{n}:=\{x\in
X\mid\{n\,|\,x\in A_{n}\}\in\mathbf{\mathcal{E}}\}.
\end{equation}

\end{definition}

Strict logic tells us that the $\mathcal{E}$-limit is well-defined also for an
empty $\mathcal{E}$ or if $\mathcal{E}$ contains all subsets. Indeed, if
$\mathcal{E}=\emptyset$ then $\mathbf{\mathcal{E}}$-$\lim_{n\rightarrow\infty
}A_{n}=\emptyset,$ and if $\mathcal{E}$ is the family of all subsets then
$\mathbf{\mathcal{E}}$-$\lim_{n\rightarrow\infty}A_{n}=X$.

\begin{theorem}
\label{thm:limit-E-limit}Let $X$ be a set, let $(A_{n})_{n\in\mathbb{N}}$ be a
sequence of subsets of $X,$ and let $\mathcal{E}$ be an eventual family in
$\mathbb{N}$. If $\mathcal{E}$ is a finitely-insensitive family which is not
trivial, i.e.,\ is not either empty or containing all subsets, and if the
(classical) $\lim_{n\rightarrow\infty}A_{n}$ exists then%
\begin{equation}
\mathbf{\mathcal{E}}\text{-}\lim_{n\rightarrow\infty}A_{n}=\lim_{n\rightarrow
\infty}A_{n}.
\end{equation}

\end{theorem}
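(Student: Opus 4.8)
The plan is to reduce everything to the ``incidence sets'' $N_x:=\{n\in\mathbb{N}\mid x\in A_n\}$ and then to determine exactly which of these belong to $\mathcal{E}$. First I would record the set-theoretic descriptions of the classical limits that are appropriate here (since $X$ is merely a set): $x\in\liminf_{n\to\infty}A_n=\bigcup_n\bigcap_{m\ge n}A_m$ exactly when $N_x$ is cofinite in $\mathbb{N}$, and $x\in\limsup_{n\to\infty}A_n=\bigcap_n\bigcup_{m\ge n}A_m$ exactly when $N_x$ is infinite. As $\liminf\subseteq\limsup$ always holds, the assumption that $\lim_{n\to\infty}A_n$ exists is equivalent to $\limsup\subseteq\liminf$, i.e.\ to the dichotomy that for every $x\in X$ the set $N_x$ is \emph{either finite or cofinite}; and in that situation $x\in\lim_{n\to\infty}A_n$ iff $N_x$ is cofinite.

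Next I would pin down which subsets of $\mathbb{N}$ lie in $\mathcal{E}$, using the three standing hypotheses. Since $\mathcal{E}$ is nonempty it contains some $S$, and since it is eventual (upper hereditary) it then contains $\mathbb{N}\supseteq S$; being finitely-insensitive, it therefore contains every set obtained from $\mathbb{N}$ by deleting finitely many elements, that is, every cofinite set. Conversely, $\emptyset\notin\mathcal{E}$: otherwise upper heredity would force every subset of $\mathbb{N}$ into $\mathcal{E}$, contradicting non-triviality. Hence no finite set lies in $\mathcal{E}$ either, because any finite set can be turned into $\emptyset$ by deleting finitely many elements, and finite-insensitivity would then put $\emptyset\in\mathcal{E}$.

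Finally I would combine the two halves. By the dichotomy, every point $x$ has $N_x$ finite or cofinite, and for such a set the previous paragraph gives $N_x\in\mathcal{E}$ iff $N_x$ is cofinite, i.e.\ iff $x\in\lim_{n\to\infty}A_n$. But $N_x\in\mathcal{E}$ is by Definition~\ref{def:E-limit} precisely the condition $x\in\mathcal{E}\text{-}\lim_{n\to\infty}A_n$, so the two sets coincide. I do not anticipate a real obstacle: the argument is short, and the only thing needing care is the bookkeeping of which hypothesis is doing what — non-triviality together with upper heredity is what excludes the two extreme families (so that $\emptyset$ and all finite sets stay out while $\mathbb{N}$ gets in), and finite-insensitivity is exactly what bridges ``$\mathbb{N}$ versus cofinite'' and ``$\emptyset$ versus finite''. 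It is also worth stating explicitly that in this set-theoretic setting $\liminf$ and $\limsup$ of a sequence of sets are taken to be $\bigcup_n\bigcap_{m\ge n}A_m$ and $\bigcap_n\bigcup_{m\ge n}A_m$, so that the opening reduction is justified.
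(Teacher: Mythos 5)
Your proof is correct and is in substance the paper's own argument: the facts that $\mathcal{E}$ must contain every cofinite set but cannot contain any finite set are exactly the paper's statements that any nontrivial finitely-insensitive family is squeezed between $\mathcal{H}$ (the cofinite sets) and $\mathcal{G}$ (the infinite sets), and your dichotomy on the incidence sets $N_x$ is the pointwise form of the paper's identification of $\liminf_{n\rightarrow\infty}A_{n}$ and $\limsup_{n\rightarrow\infty}A_{n}$ with the $\mathcal{H}$- and $\mathcal{G}$-limits. The only presentational difference is that you verify membership of $N_x$ in $\mathcal{E}$ directly rather than sandwiching the $\mathcal{E}$-limit between the lower and upper limits, which if anything makes the use of the purely set-theoretic formulas $\liminf_{n\to\infty}A_n=\bigcup_{n}\bigcap_{m\ge n}A_m$ and $\limsup_{n\to\infty}A_n=\bigcap_{n}\bigcup_{m\ge n}A_m$ more explicit.
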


\begin{proof}
Note that, for a given sequence of sets $(A_{n})_{n\in\mathbb{N}}$, the
`larger' the eventual family $\mathcal{E}$ is, the `larger' is its
$\mathcal{E}$-limit.

Denote by $\mathcal{G}$ the family of all\textit{ infinite }subsets of
$\mathbb{N}$ and by $\mathcal{H}$ the family of all subsets of $\mathbb{N}%
$\textit{ }with \textit{finite complement\footnote{The families $\mathcal{G}$
and $\mathcal{H}$ were denoted by $\mathcal{N}_{\infty}^{\#}$ and
$\mathcal{N}_{\infty}$, respectively, in \cite[page 108]{rock-book}.}}. Then
clearly (cf.\ Examples \ref{exmps}) The upper limit (resp.\ lower limit) of
$A_{n}$ is obtained as $\mathcal{E}\text{-}\lim_{n\rightarrow\infty}A_{n}$ for
$\mathcal{E}:=\mathcal{G}$\thinspace(resp.\ $\mathcal{E}:=\mathcal{H}$.)

Now, The family $\mathcal{G}$ is the largest finitely-insensitive family which
is not the set of all subsets. This is so because if $\mathcal{G}$ would
contain a finite set then it would have to contain the empty set, hence, all subsets.

And the family $\mathcal{H}$ is the smallest finitely-insensitive family which
is not empty. This is so because if $\mathcal{H}$ is not empty, it has a
member $S$, thus, must contain the whole $\mathbb{N}$, {hence}, all subsets
with finite complement.

Consequently, for a sequence $(A_{n})_{n\in\mathbb{N}}$ for which
$\lim_{n\rightarrow\infty}A_{n}$ exists, that limit will be also the
$\mathcal{E}$-limit for any finitely-insensitive eventual family $\mathcal{E}$
which is not trivial, i.e.,\ is not either empty or containing all subsets.
\end{proof}

\subsection{Topological vs.\ Purely Set-Theoretical\label{subsec:topo-vs-set}%
\ }

Note that in contrast to Subsections \ref{subsect:LimEv} and
\ref{subsect:OpFxPt}, the notions in Subsection \ref{subsect:LimSeqSt} are
purely set-theoretic and do not involve any topology in $X$. Yet, one can
distill the topological aspect via the next definition.

\begin{definition}
\label{def:distill}Let $X$ be a Hausdorff topological space and let
$\mathcal{F}$ be an eventual family in $X$. The \textquotedblleft%
\textbf{closure} of an eventual family $\mathcal{F}$ in $X$\textquotedblright,
denoted by $\mathrm{cl}\mathcal{F}$, consists of \textit{all subsets
}$S\subseteq X$ \textit{such that all the open subsets }$U\subseteq X$
\textit{which contain }$S$ \textit{belong to $\mathcal{F}$.}
\end{definition}

Clearly, $\mathcal{F}$ is always a subfamily of $\mathrm{cl}\mathcal{F}$, and
the set of limit points of an eventual family $\mathcal{F}$, in a Hausdorff
topological space $X,$ is just $\mathrm{Star}(\mathrm{cl}\mathcal{F}),$ given
in Definition \ref{def:star-set}.

\section{Multisets and Multifamilies\label{sect:mult-fam}}

A \textbf{multiset} (sometimes termed \textbf{bag}, or \textbf{mset}) is a
modification of the concept of a set that allows for multiple instances for
each of its elements. The number of instances given for each element is called
the multiplicity of that element in the multiset. The multiplicities of
elements are any number in $\{0,1,\ldots,\infty\}$, see the corner-stone
review of Blizard \cite{blizard-multiset-1989}.

\begin{definition}
(i) A \textbf{multiset} $M$ in a set $X$ is represented by a function
$\varphi_{M}:X\rightarrow\{0,1,\ldots,\infty\}$ such that for any $x\in X,$
$\varphi_{M}(x)$ is the multiplicity of $x$ in $M$. We refer to this function
as the \textquotedblleft\textbf{representing function of the multiset}%
\textquotedblright. If $\varphi_{M}(x)=0$ then the multiplicity $0$ means `not
belonging to the set'. A subset $S\subseteq X$ is a multiset represented by
$\iota_{S}$, the \textquotedblleft\textit{indicator function\textquotedblright%
} of $S,$ i.e.,%
\begin{equation}
\iota_{S}(x):=\left\{
\begin{array}
[c]{cc}%
1, & \mathrm{if}\text{ \ }x\in S,\\
0, & \mathrm{if}\text{ \ }x\notin S.
\end{array}
\right.
\end{equation}

(ii) A \textbf{multifamily} $\mathcal{M}$ on a set $X$ is a multiset in the
powerset $2^{X}$ of $X$ (i.e., all the subsets of $X$). Its representing
function, denoted by $\varphi_{\mathcal{M}}:2^{X}\rightarrow\{0,1,\ldots
,\infty\},$is such that for any $S\subseteq X,$ $\varphi_{M}(S)$ is the
multiplicity of $S$ in $\mathcal{M}$. A family $\mathcal{F}$ of subsets of $X$
is a multifamily on $X$ represented by $\iota_{\mathcal{F}}$, the
\textquotedblleft\textit{indicator function\textquotedblright\ of
}$\mathcal{F}$, i.e.,%
\begin{equation}
\iota_{\mathcal{F}}(f):=\left\{
\begin{array}
[c]{cc}%
1, & \mathrm{if}\text{ \ }f\in\mathcal{F},\\
0, & \mathrm{if}\text{ \ }f\notin\mathcal{F}.
\end{array}
\right.
\end{equation}

(iii) A multifamily $\mathcal{M}$ on a set $X$ with a representing function
$\varphi_{\mathcal{M}}$is called \textbf{increasing} if%
\begin{equation}
S,S^{\prime}\subseteq X,\,S\subseteq S^{\prime}\Rightarrow\varphi
_{\mathcal{M}}(S)\leq\varphi_{\mathcal{M}}(S^{\prime}),
\end{equation}
and called \textbf{decreasing} if%
\begin{equation}
S,S^{\prime}\subseteq X,\,S\subseteq S^{\prime}\Rightarrow\varphi
_{\mathcal{M}}(S)\geq\varphi_{\mathcal{M}}(S^{\prime}).
\end{equation}

\end{definition}

Clearly, a\textit{ family }of subsets of $X$ is an \textit{eventual
(resp.\ co-eventual) family if the multifamily that defines it is increasing
(resp.\ decreasing)}. The next example shows why these notions may be useful.

\begin{example}
\label{ex:Gap} Considering the set $\mathbb{N}$, for a, finite or infinite,
subset $S\subseteq\mathbb{N}$ write $S$ as%
\begin{equation}
S=\{n_{1}^{S},n_{2}^{S},\ldots\},
\end{equation}
where $n_{\ell}^{S}\in\mathbb{N}$ for all $\ell,$ and the sequence $(n_{\ell
}^{S})_{\ell=1}^{L}$ (where $L$ is either finite or $\infty$) is strictly
increasing, i.e., $n_{1}^{S}<n_{2}^{S}<\ldots$. We consider the\textit{
}\textbf{gaps }between consecutive elements\textit{ }in\textit{ }$S$ as the
sequence of differences%
\begin{equation}
n_{2}^{S}-n_{1}^{S}-1,n_{3}^{S}-n_{2}^{S}-1,\ldots,
\end{equation}
where, if $S$ is finite add $\infty$ at the end. Defining%
\begin{equation}
\mathrm{Gap}(S):=\limsup_{k}(n_{k+1}^{S}-n_{k}^{S}-1),
\end{equation}
makes $\mathrm{Gap}$ a\textit{ }multifamily\textit{ }on\textit{ }$\mathbb{N}$,
thus taking values in $\{0,1,\ldots,\infty\}$, in particular, taking the value
$\infty$ for (among others) any finite $S$.
\end{example}

Note that if $\mathrm{Gap}(S)$ is finite then there must be an infinite number
of differences $(n_{k+1}^{S}-n_{k}^{S}-1)$ equal to $\mathrm{Gap}(S)$, but
this is not true for any larger integer - because by the definition of
$\limsup$ and because we are dealing with integer-valued items, a finite
$\limsup$ must actually be attained an infinite number of times.

Observe further that the larger the set $S$ is -- the smaller (or equal) is
$\mathrm{Gap}(S).$ Thus, $\mathrm{Gap}$ is a decreasing multifamily.

Define the complement-multifamily for some multifamily $\mathcal{G}$ on the
subsets of a set $X$ by%
\begin{equation}
\mathcal{G}^{c}(S):=\mathcal{G}(S^{c}),\quad\forall S\subseteq X
\label{eq:script-G-complement}%
\end{equation}

where $S^{c}$ is the complement of $S$ in $X$.

We will focus on $\mathrm{coGap}:=\mathrm{Gap}^{c}$. For any $S\subseteq
\mathbb{N},$ let us denote by $c_{S}$ the maximal number of integers between
consecutive elements of $S,$ namely, between $n_{\ell}^{S}\in S$ and
$n_{\ell+1}^{S}\in S.$ If $S$ has arbitrarily big such `intervals' between
consecutive elements then we write $c_{S}=\infty$. With this in mind,
$\mathrm{coGap}=\mathrm{Gap}^{c}$ is an increasing multifamil\textit{y} equal
to $(c_{S})_{\forall S\subseteq\mathbb{N}}.$

\subsection{Extensions to Multifamilies\label{subsect:transferring}}

We now extend some of the notions of Subsection \ref{subsec:eve-fams} to multifamilies.

\begin{definition}
\label{def:star-set copy(1)}Given a multifamily $\mathcal{M}$ on the subsets
of a set $X$ whose representing function is $\varphi_{\mathcal{M}}.$ The
\textquotedblleft\textbf{star set associated with} $\mathcal{M}$%
\textquotedblright, denoted by\thinspace\thinspace$\mathrm{Star}%
(\mathcal{M}),$ is the multiset $M$ on $X$ whose representing function
$\varphi_{M}$ is related to $\varphi_{\mathcal{M}}$ in the following manner%
\begin{equation}
\mathrm{Star}(\mathcal{M}):=M,\text{ such that }\varphi_{M}(x)=\varphi
_{\mathcal{M}}(\{x\}).
\end{equation}

\end{definition}

\begin{definition}
\label{def:push copy(1)}Given a multifamily $\mathcal{M}$ on the subsets of
$X$\ whose representing function is $\varphi_{\mathcal{M}}$ and a mapping
between sets $f:X\rightarrow Y$. The multifamily $\mathcal{G}$ on the subsets
of $Y,$ denoted by $\mathcal{G=}\mathrm{Push}(f,\mathcal{M)}$, with
representing function $\varphi_{\mathcal{G}},$ will be called the
\textquotedblleft\textbf{push} of $\mathcal{M}$ by $f$\textquotedblright\ if
its representing function is related to the representing function of
$\mathcal{M}$ in the following manner {%
\begin{equation}
\mathcal{G=}\mathrm{Push}(f,\mathcal{M)}\text{ such that }\varphi
_{\mathcal{G}}(S)=\varphi_{\mathcal{M}}(f^{-1}(S)).
\end{equation}
}
\end{definition}

\begin{definition}
\label{def:finite-intensive copy(1)}A multifamily $\mathcal{M}$ of subsets of
$\mathbb{N}$ whose representing function is $\varphi_{\mathcal{M}}$ is called
a \textquotedblleft\textbf{finitely-insensitive multifamily\textquotedblright}
if for any $S\in\mathcal{M}$, finitely changing $S$, i.e.,\ adding and/or
deleting a finite number of its members, will not change its multiplicity,
i.e., will result in a set $S^{\prime}\in\mathcal{M}$ such that $\varphi
_{\mathcal{M}}(S)=\varphi_{\mathcal{M}}(S^{\prime})$.
\end{definition}

\begin{definition}
\label{def:distill copy(1)}Let $X$ be a Hausdorff topological space and let
$\mathcal{M}$ be an \textit{increasing} multifamily whose representing
function is $\varphi_{\mathcal{M}}$. The \textquotedblleft\textbf{closure of
an increasing multifamily }$M$\textbf{ in }$X$\textquotedblright, denoted by
$\mathrm{cl}\mathcal{M}$, is defined to be the (increasing) multifamily such
that for any $S\subseteq X$ it holds that%
\begin{equation}
\varphi_{\mathrm{cl}\mathcal{M}}(S)=\min\{\varphi_{\mathcal{M}}(U)\mid\text{
all\ \textit{open subsets }}U\subseteq X\text{ such that }S\subseteq
U\}.\text{ }%
\end{equation}

\end{definition}

\begin{definition}
\label{def:it:lim}Let $X$ be a Hausdorff topological space and let
$\mathcal{M}$ be an \textit{increasing} multifamily whose representing
function is $\varphi_{\mathcal{M}}$. The multiset $M:=\mathrm{Star}%
(\mathrm{cl}\mathcal{M})$ will be called the \textquotedblleft%
\textbf{multiset-limit} of $\mathcal{M}$\textquotedblright\ and denoted by
$\lim\mathcal{M}$. Its representing function is for any $x\in X,$%
\begin{equation}
\varphi_{M}(x)=\min\{\varphi_{\mathcal{M}}(U)\mid\text{all\ open
subsets}\mathit{\ }U\subseteq X\text{ such that }x\in U\}.
\end{equation}

\end{definition}

Given a multifamily $\mathcal{M}$ on the subsets of $\mathbb{N}$ whose
representing function is $\varphi_{\mathcal{M}}$, the `limiting notions' with
respect to $\mathcal{M}$ for a sequence $(x_{n})_{n\in\mathbb{N}}$, are
defined as those with respect to $\mathrm{Push}(f,\mathcal{M)}$ of
$\mathcal{M}$ to $X$ by the function $f$ $f:\mathbb{N}\rightarrow X$ which
represents the sequence $(x_{n})_{\ }$. In particular, for an increasing
multifamily $\mathcal{M}$ on the subsets of $\mathbb{N}$ whose representing
function is $\varphi_{\mathcal{M}}$, the multiset limit of $\mathrm{Push}%
(f,\mathcal{M)}$ will be called the \textquotedblleft\textbf{multiset-limit}
of $(x_{n})$\textquotedblright, denoted by $\lim_{\mathcal{M}}x_{n}.$

Denoting the representing function of this multiset $\mathcal{G}$ on $X$ by
$\varphi_{\mathcal{G}},$ we can describe it as follows. Given a point $x\in
X,$ consider the following subsets of $\mathbb{N}${%
\begin{equation}
S(U):=\{n\in\mathbb{N}\mid x_{n}\in U\},\text{ for open neighborhoods }U\text{
of }x.
\end{equation}
}

Then,%
\begin{equation}
\varphi_{\mathcal{G}}(x)=\min\{\varphi_{\mathcal{M}}(S(U))\mid\text{all\ open
subsets}\mathit{\ }U\subseteq X\text{ such that }x\in U\}.
\end{equation}

\begin{remark}
\label{rerere} Note, that for a set $S$ not to belong to $\mathrm{coGap}$,
i.e.,\ to have $\mathrm{coGap}(S)=0,$ just means that $S$ is finite - as a
`family, ignoring multiplicities' and $\mathrm{coGap}$ is just the family of
\textit{infinite} sets of natural\ numbers.

Thus, when we turn to the \textit{limit} of a sequence $(x_{n})_{n\in
\mathbb{N}}$ in a Hausdorff Space $X$ (a notion which is obviously dependent
on the topology. In a Banach or Hilbert space we will have strong and weak
limits etc.); and we take the $\mathrm{coGap}$-limit (it will be a multiset on
$X$, to which for some $x$ in $X$ to belong (at least) $n$ times, one must
have, for every neighborhood $U$ of $x$, that the $x_{n}$ stay in $U$ for some
$n$ consecutive places as far as we go); then the $\mathrm{coGap}$-limit of
$(x_{n})_{n\in\mathbb{N}}$, `forgetting the multiplicities' is just the set of
accumulation points of $(x_{n})_{n\in\mathbb{N}}$ (which is, recalling the
examples in Section \ref{sect:Ev}, just its $\mathcal{G}$-limit for
$\mathcal{G}$ the eventual family of the infinite subsets of $\mathbb{N}$).

Note that, in general, if the sequence has a limit $x^{\ast}$ (in the good old
sense) then its $\mathrm{coGap}$-limit `includes $x^{\ast}$ infinitely many
times and does not include any other point'. This sort of indicates to what
extent the $\mathrm{coGap}$-limit may be viewed as `more relaxed' than the
usual limit.

The inverse implication does not always hold (it holds however in a compact
space) as the following counterexample shows. In $\mathbb{R}$ (the reals),
define a sequence by%
\begin{equation}
x_{2n}:=n\text{ \ and \ }x_{2n-1}:=-1
\end{equation}
then its $\mathrm{coGap}$-limit contains $-1$ infinitely often and does not
contain others, but $-1$ is not a limit.
\end{remark}

\section{Convergence of Algorithms for Solving the Common Fixed-Point Problem}

Given a finite family of self-mapping operators $\left\{  T_{i}\right\}
_{i=1}^{m}$ acting on the Hilbert space $H$ with $\operatorname*{Fix}T_{i}%
\neq\emptyset,$ $i=1,2,\ldots,m,$ where $\operatorname*{Fix}T_{i}:=\{x\in
H\mid T_{i}(x)=x\}$ is the fixed points set of $T_{i},$ the \textquotedblleft%
\textbf{common fixed point problem}\textquotedblright\ (CFPP) is to find a
point%
\begin{equation}
x^{\ast}\in\cap_{i=1}^{m}\operatorname*{Fix}T_{i}. \label{Common fixed pp}%
\end{equation}
This problem serves as a framework for handling many important aspects of
solving systems of nonlinear equations, feasibility-seeking of systems of
constraint sets and optimization problems, see, e.g., the excellent books by
Berinde \cite{Berinde-book} and by Cegielski \cite{Cegielski-book} and
references therein. In particular, iterative algorithms for the CFPP form an
ever growing part of the field. There are many algorithms around for solving
CFPPs, see, e.g., Zaslavski's book \cite{zaslavski-book}. To be specific, we
use the \textquotedblleft Almost Cyclic Sequential Algorithm (ACSA) for the
common fixed-point problem\textquotedblright, which is Algorithm 5 in Censor
and Segal \cite{censor-segal-2009}, which is, in turn, a special case of an
algorithm in the paper by Combettes \cite[Algorithm 6.1]{Combettes01}. The
abstract study of limits of eventual families developed here can serve as a
unifying convergence analysis of many iterative processes. It grew out of our
look at the almost cyclic sequential algorithm and, therefore, we describe
this algorithm and its relation with the present work next.

\subsection{The Almost Cyclic Sequential Algorithm (ACSA)}

Let $\left\langle x,y\right\rangle $ and $\left\Vert x\right\Vert $ be the
Euclidean inner product and norm, respectively, in the $J$-dimensional
Euclidean space $R^{J}$. Given $x,y\in R^{J}$ we denote the half-space%
\begin{equation}
H(x,y):=\left\{  u\in R^{J}\mid\left\langle u-y,x-y\right\rangle
\leq0\right\}  .
\end{equation}

\begin{definition}
\label{Def of directed ops}An operator $T:R^{J}\rightarrow R^{J}$ is called
\textquotedblleft\textbf{a cutter\textquotedblright} if%
\begin{equation}
\operatorname*{Fix}T\subseteq H(x,T(x)),\text{ for all }x\in R^{J},
\end{equation}
or, equivalently,%
\begin{equation}
\text{if }z\in\operatorname*{Fix}T\text{ then }\left\langle T\left(  x\right)
-x,T\left(  x\right)  -z\right\rangle \leq0,\text{ for all }x\in R^{J}.
\label{def directed2}%
\end{equation}

\end{definition}

The class of cutters was called $\Im$-class by Bauschke and Combettes
\cite{BC01} who first defined this notion and showed (see \cite[Proposition
2.4]{BC01}) (i) that the set of all fixed points of a cutter $T$ with nonempty
$\operatorname*{Fix}T$ is closed and convex because%
\begin{equation}
\operatorname*{Fix}T=\cap_{x\in R^{J}}H\left(  x,T\left(  x\right)  \right)  ,
\end{equation}
and (ii) that the following holds%
\begin{equation}
\text{If }T\in\Im\text{ then }Id+\lambda(T-Id)\in\Im,\text{ for all }%
\lambda\in\lbrack0,1], \label{BCresult}%
\end{equation}
where $Id$ is the identity operator. This class of operators includes, among
others, the resolvents of a maximal monotone operators, the firmly
nonexpansive operators, namely, operators $N:R^{J}\rightarrow R^{J}$ that
fulfil%
\begin{equation}
\left\Vert N(x)-N(y)\right\Vert ^{2}\leq\left\langle
N(x)-N(y),x-y\right\rangle ,\text{ for all }x,y\in R^{J},
\end{equation}
the orthogonal projections and the subgradient projectors. Note that every
cutter belongs to the class of operators $\mathcal{F}^{0},$ defined by Crombez
\cite[p. 161]{Crombez05}. The term \textquotedblleft cutter\textquotedblright%
\ was proposed in \cite{ceg-cen-2011}, see \cite[pp. 53--54]{Cegielski-book}
for other terms that are used for these operators.

The following definition of a demiclosed operator that originated in Browder
\cite{Browder} (see, e.g., \cite{Combettes01}) will be required.

\begin{definition}
An operator $T:R^{J}\rightarrow R^{J}$ is said to be \textquotedblleft%
\textbf{demiclosed} \textbf{at }$y\in R^{J}$\textquotedblright\ if for every
$\overline{x}\in R^{J}$ and every sequence $(x_{n})_{n\in\mathbb{N}}$ in
$R^{J},$ such that, $\lim_{n\rightarrow\infty}x_{n}=\overline{x}$ and
$\lim_{n\rightarrow\infty}T(x_{n})=y,$ we have $T(\overline{x})=y.$
\end{definition}

For instance, the orthogonal projection onto a closed convex set is everywhere
a demiclosed operator, due to its continuity.

\begin{remark}
\cite{Combettes01} If $T:R^{J}\rightarrow R^{J}$ is nonexpansive, then $T-Id$
is demiclosed on $R^{J}.$
\end{remark}

In sequential algorithms for solving the common fixed point problem the order
by which the operators are chosen for the iterations is given by a
\textquotedblleft\textbf{control sequence\textquotedblright}\textit{ }of
indices\textit{\ }$(i(n))_{n\in\mathbb{N}},$ see, e.g., \cite[Definition
5.1.1]{Censor book}.

\begin{definition}
(i) \textbf{Cyclic control.} A control sequence is \textquotedblleft%
\textbf{cyclic\textquotedblright} if $i(n)=n\operatorname*{mod}m+1,$ where $m$
is the number of operators in the common fixed point problem.

(ii) \textbf{Almost cyclic control. }$(i(n))_{n\in\mathbb{N}}$ is
\textquotedblleft\textbf{almost cyclic on }$\{1,2,\ldots,m\}$%
\textquotedblright\ if $1\leq i(n)\leq m$ for all $n\geq0,$ and there exists
an integer $c\geq m$ (called the \textquotedblleft\textbf{almost cyclicality
constant\textquotedblright}), such that, for all $n\geq0$, $\{1,2,\ldots
,m\}\subseteq\{i(n+1),i(n+2),\ldots,i(n+c)\}.$
\end{definition}

Consider a finite family $T_{i}:R^{J}\rightarrow R^{J},$ $i=1,2,\ldots,m,$ of
cutters with $\cap_{i=1}^{m}\operatorname*{Fix}T_{i}\neq\emptyset$. The
following algorithm for finding a common fixed point of such a family is a
special case of \cite[Algorithm 6.1]{Combettes01}.

\begin{algorithm}
\textbf{Almost Cyclic Sequential Algorithm (ACSA) for solving common fixed
point problems }\cite[Algorithm 5]{censor-segal-2009}
\label{alg CFP of AV ops}$\left.  {}\right.  $

\textbf{Initialization:} $x_{0}\in R^{J}$ is an arbitrary starting point.

\textbf{Iterative Step: }Given\textbf{\ }$x_{n},$ compute $x_{n+1}$ by%
\begin{equation}
x_{n+1}=x_{n}+\lambda_{n}(T_{i(n)}\left(  x_{n}\right)  -x_{n}).
\label{eq. Iter of AVoperators}%
\end{equation}

\textbf{Control: }$(i(n))_{n\in\mathbb{N}}$ is almost cyclic on $\{1,2,\ldots
,m\}$.

\textbf{Relaxation parameters: }$(\lambda_{n})_{n\in\mathbb{N}}$ are confined
to the interval $\left[  0,2\right]  $.
\end{algorithm}

The convergence theorem of Algorithm \ref{alg CFP of AV ops} is as follows.

\begin{theorem}
\label{Theor. GenKM}Let $\left\{  T_{i}\right\}  _{i=1}^{m}$ be a finite
family of cutters $T_{i}:R^{J}\rightarrow R^{J}$, which satisfies

(i) $\Omega:=\cap_{i=1}^{m}\operatorname*{Fix}T_{i}$ is nonempty, and

(ii) $T_{i}-Id$ are demiclosed at $0,$ for every $i\in\{1,2,\ldots,m\}.$

Then any sequence $(x_{n})_{n\in\mathbb{N}},$ generated by Algorithm
\ref{alg CFP of AV ops}, converges to a point in $\Omega.$
\end{theorem}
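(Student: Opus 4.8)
The plan is to establish convergence in three stages: first showing Fej\'er monotonicity of the sequence with respect to $\Omega$, then extracting boundedness and the asymptotic regularity property $\lim_{n\to\infty}\|x_{n+1}-x_{n}\|=0$, and finally using the demiclosedness hypothesis (ii) together with the abstract fixed-point machinery of Section \ref{sect:Ev} to identify the limit.

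For the first stage, fix $z\in\Omega$. Since each $T_i$ is a cutter, \eqref{def directed2} gives $\langle T_{i(n)}(x_n)-x_n,\,T_{i(n)}(x_n)-z\rangle\le 0$. Expanding $\|x_{n+1}-z\|^2 = \|x_n - z + \lambda_n(T_{i(n)}(x_n)-x_n)\|^2$ and using this inequality, one obtains the standard estimate
\begin{equation}
\|x_{n+1}-z\|^2 \le \|x_n-z\|^2 - \lambda_n(2-\lambda_n)\|T_{i(n)}(x_n)-x_n\|^2 .
\end{equation}
Hence $(\|x_n-z\|)_{n\in\mathbb N}$ is nonincreasing (so in particular $(x_n)$ is bounded and $\|x_n-z\|$ converges for every $z\in\Omega$), and by telescoping $\sum_n \lambda_n(2-\lambda_n)\|T_{i(n)}(x_n)-x_n\|^2 < \infty$. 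To push this to asymptotic regularity I would need to assume — or note that the statement tacitly assumes, as is standard for such results — that the relaxation parameters stay bounded away from the endpoints, i.e.\ $\varepsilon \le \lambda_n \le 2-\varepsilon$ for some $\varepsilon>0$; this is the natural reading needed to make the theorem true. Under it, $\lambda_n(2-\lambda_n)\ge \varepsilon^2$, so $\|T_{i(n)}(x_n)-x_n\|\to 0$, and therefore $\|x_{n+1}-x_n\| = \lambda_n\|T_{i(n)}(x_n)-x_n\|\to 0$ as well.

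For the final stage I would use the almost cyclic control: let $c$ be the almost cyclicality constant. Since $(x_n)$ is bounded in $R^J$, it has a convergent subsequence $x_{n_k}\to \overline{x}$. Fix an index $i\in\{1,\dots,m\}$; by almost cyclicality there is, for each $k$, an index $n_k \le \ell_k < n_k + c$ with $i(\ell_k)=i$, and since $c$ is fixed and $\|x_{n+1}-x_n\|\to 0$ we get $\|x_{\ell_k}-x_{n_k}\|\le \sum_{j=n_k}^{\ell_k-1}\|x_{j+1}-x_j\|\to 0$, so $x_{\ell_k}\to\overline{x}$ too; moreover $\|T_i(x_{\ell_k})-x_{\ell_k}\| = \|T_{i(\ell_k)}(x_{\ell_k})-x_{\ell_k}\|\to 0$, so $(T_i-Id)(x_{\ell_k})\to 0$. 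Demiclosedness of $T_i-Id$ at $0$ then forces $(T_i-Id)(\overline{x})=0$, i.e.\ $\overline{x}\in\operatorname*{Fix}T_i$. As $i$ was arbitrary, $\overline{x}\in\Omega$. Finally, since $\|x_n-\overline{x}\|$ converges (Fej\'er monotonicity applied with $z=\overline{x}\in\Omega$) and a subsequence tends to $0$, the whole sequence converges to $\overline{x}$.

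I expect the main obstacle to be not any single computation but rather making the hypotheses airtight: the theorem as literally stated (with $\lambda_n\in[0,2]$ only) is false without a separation-from-the-endpoints condition, so the proof must either invoke the implicit standing assumption $0<\liminf\lambda_n\le\limsup\lambda_n<2$ or quote the precise hypotheses of \cite[Algorithm 6.1]{Combettes01}/\cite[Algorithm 5]{censor-segal-2009} that the ACSA inherits. A secondary technical point is the bookkeeping in the almost-cyclic argument — ensuring that for each of the finitely many indices $i$ one can simultaneously locate, within a window of fixed length $c$ following $n_k$, an occurrence of $i$ in the control sequence, which is exactly what the definition of almost cyclic control on $\{1,\dots,m\}$ guarantees. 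Everything else (Fej\'er monotonicity, telescoping, the closed-and-convexity of $\operatorname*{Fix}T_i$ from \cite[Proposition 2.4]{BC01}, and the subsequence-to-whole-sequence step) is routine. One could alternatively phrase the last step through Theorem \ref{tm}, viewing the subsequence $(x_{\ell_k})$ as a sequence that follows $T_i$ with respect to the eventual family of infinite subsets of $\mathbb N$, but the direct demiclosedness argument is cleaner here.
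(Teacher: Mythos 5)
Your proof is essentially correct, but it is a genuinely different route from the paper: the paper does not prove the theorem directly at all, it simply observes that the ACSA is a special case of \cite[Algorithm 6.1]{Combettes01} and invokes \cite[Theorem 6.6 (i)]{Combettes01} in one line. What you supply instead is the standard self-contained argument that sits inside such results: the cutter inequality \eqref{def directed2} gives the Fej\'er-type estimate $\|x_{n+1}-z\|^{2}\le\|x_{n}-z\|^{2}-\lambda_{n}(2-\lambda_{n})\|T_{i(n)}(x_{n})-x_{n}\|^{2}$, hence boundedness and summability; separation of the $\lambda_{n}$ from $0$ and $2$ upgrades this to $\|T_{i(n)}(x_{n})-x_{n}\|\to0$ and $\|x_{n+1}-x_{n}\|\to0$; the almost cyclic window of fixed length $c$ transfers this to each fixed index $i$ along a perturbed subsequence; demiclosedness of $T_{i}-Id$ at $0$ puts the cluster point in $\Omega$; and Fej\'er monotonicity converts subsequential into full convergence. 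All of these steps are sound as written. Your flag about the relaxation parameters is also well taken and is the one point where the two approaches genuinely differ in what they must assume explicitly: as transcribed here the algorithm only requires $\lambda_{n}\in[0,2]$, under which the conclusion fails (take $\lambda_{n}\equiv0$), so your proof must add the standard condition $\varepsilon\le\lambda_{n}\le2-\varepsilon$ (or at least $\liminf\lambda_{n}(2-\lambda_{n})>0$), whereas the paper implicitly inherits whatever relaxation hypotheses \cite{Combettes01} and \cite[Algorithm 5]{censor-segal-2009} impose. In exchange for being longer, your argument makes that dependence visible and keeps the convergence analysis elementary and self-contained; the paper's citation buys brevity and the extra generality of Combettes' quasi-Fej\'erian framework (e.g.\ tolerance of summable errors), which your direct proof does not attempt. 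Your closing remark that the last step could alternatively be phrased via Theorem \ref{tm} with the eventual family of infinite subsets of $\mathbb{N}$ is exactly the connection the paper itself draws in its final section, so either phrasing is consistent with the authors' intent.
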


\begin{proof}
This follows as a special case of \cite[Theorem 6.6 (i)]{Combettes01}.
\end{proof}

\subsection{An Abstract Approach to The Convergence of the ACSA}

Given a sequence $(x_{n})_{n\in\mathbb{N}}$ in a Hausdorff topological space
$X$, push the multiset $\mathrm{coGap}$ in $\mathbb{N}$ to a multiset
$\mathcal{M}$ on the subsets of $X$, and then consider its \textit{limit} $L$
(see Definitions \ref{def:distill copy(1)} and \ref{def:it:lim} above) with
respect to the multiset $\mathrm{Star}(\mathrm{cl}\mathcal{M})$ whose
representing function value at $x\in X$ is the minimum of the value of
$\mathrm{coGap}$ on the sets $\{n\in\mathbb{N}\,\mid\,x_{n}\in U\}$ for (open)
neighborhoods $U$ of $x$.

Then, by what was said in Subsection \ref{subsect:OpFxPt}, Example
\ref{ex:Gap} and Theorem \ref{tm}, we reach the following conclusion.

\begin{conclusion}
For an operator (i.e.,\ a continuous mapping) $T:X\rightarrow X$, if
$(x_{n})_{n\in\mathbb{N}}$ follows $T$ for the eventual family which is the
level family, for some $c,$%
\begin{equation}
\mathrm{coGap}_{c}:=\{S\subset\mathbb{N}\,\mid\mathrm{coGap}(S)\geq c\},
\end{equation}
then the level set $\{x\in X\mid\,L(x)\geq c\},$ where $L$ is the limit of the
multiset $\mathcal{M}$ on the subsets of $X$, mentioned above, will consist of
fixed points of $T$.
\end{conclusion}

This is the case with respect to each of the operators of the CFPP, for any
sequence generated by the ACSA. Thus, any sequence of iterations of the ACSA
follows each of the operators of the CFPP with respect to the eventual family
$\mathcal{E}_{c}$ in $\mathbb{N}$ consisting of \textit{all subsets of
}$\mathbb{N}$ \textit{that, after any number }$\mathit{N}$\textit{, contain
some `interval' of $c$ consecutive numbers} for some fixed number $c$.

This means that the eventual family $\mathcal{E}_{c},$ mentioned in Subsection
\ref{subsect:OpFxPt} as relevant to the sequence of iterations in the ACSA
will be just \textit{the `level family' }$\mathit{\{S\subset\mathbb{N}%
\mid\mathrm{coGap}(S)\geq c\}}$, and clearly any such level family of an
\textit{increasing} multiset is automatically an eventual family.\bigskip

\textbf{Declarations:}

Funding: T\textbf{he work of Yair Censor is supported by the Israel Science
Foundation and the Natural Science Foundation China, ISF-NSFC joint research
program Grant No. 2874/19.} (information that explains whether and by whom the
research was supported)

Conflicts of interest/Competing interests: \textbf{The authors have no
conflicts of interest to declare that are relevant to the content of this
article.} (include appropriate disclosures)

Availability of data and material: \textbf{Not applicable.} (data transparency)

Code availability: \textbf{Not applicable.} (software application or custom code)

Authors' contributions: \textbf{Not applicable.} (optional: please review the
submission guidelines from the journal whether statements are mandatory)

\end{document}